\renewcommand{\Re}{\operatorname{Re}}
\theoremstyle{plain}
\newtheorem{theorem}{Theorem}[section]
\newtheorem{lemma}[theorem]{Lemma}
\theoremstyle{definition}
\newtheorem{definition}[theorem]{Definition}%[section]
\newtheorem{example}[theorem]{Example}
\newtheorem*{ack*}{Acknowledgments}
\theoremstyle{remark}
\newtheorem{remark}[theorem]{Remark}%[section]
\DeclareMathOperator{\linspan}{span}
\DeclareMathOperator{\diam}{diam}
\begin{document}
\title{Relatively weakly open convex combinations of slices}

\author[T.~A.~Abrahamsen]{Trond A.~Abrahamsen}
\address[T.~A.~Abrahamsen]{Department of Mathematics, University of
  Agder, Postboks 422, 4604 Kristiansand, Norway.}
\email{trond.a.abrahamsen@uia.no}
\urladdr{http://home.uia.no/trondaa/index.php3}

\author[V.~Lima]{Vegard Lima}
\address[V.~Lima] {NTNU, Norwegian University of Science and
  Technology, Aalesund, Postboks 1517, N-6025 {\AA}lesund Norway.}
\email{Vegard.Lima@ntnu.no}

\date{\today}

\subjclass[2010]{46B04, 46B20}
\keywords {convex combinations of slices, relatively weakly open set,
scattered compact, diameter two property}

\begin{abstract}
  We show that $c_0$, and in fact $C(K)$ for any
  scattered compact Hausdorff space $K$, have the
  property that finite convex combinations of slices
  of the unit ball are relatively weakly open.
\end{abstract}

\maketitle

\section{Introduction}
\label{sec:introduction}
Let $X$ be a (real or complex) Banach space with unit ball
$B_X$, unit sphere $S_X$, and dual $X^*$.
Given $x^* \in S_{X^*}$ and $\varepsilon > 0$
we define a slice of $B_X$ by
\begin{equation*}
  S(x^*,\varepsilon) :=
  \{x \in B_X : \Re x^*(x) > 1 - \varepsilon
  \},
\end{equation*}
where $\Re x^*(x)$ denotes the real part of $x^*(x)$.

Recall the following successively stronger ``big-slice concepts'',
defined in \cite{ALN01}:

\begin{definition} \label{defn:diam2p} A Banach space $X$ has the
  \begin{enumerate}
  \item
    \emph{local diameter 2 property} if every slice of $B_X$ has
    diameter 2.
  \item
    \emph{diameter 2 property} if every
    non-empty relatively weakly open subset of $B_X$ has diameter 2.
  \item
    \emph{strong diameter 2 property} if every finite convex
    combination of slices of $B_X$ has diameter 2.
  \end{enumerate}
\end{definition}
Since a slice is relatively weakly open,
the diameter 2 property implies the local diameter 2 property.
For some spaces $X$ the converse is also true.
For example, it is known that if every $x \in S_X$ is an extreme
point of $B_{X^{**}}$, then every non-empty relatively weakly
open subset of $B_X$ contains a slice by Choquet's lemma
(cf. e.g. Proposition~1.3 in \cite{AHNTT}).

By Bourgain's lemma \cite[Lemma~II.1]{MR912637}
every non-empty relatively weakly open subset of $B_X$ contains
a finite convex combination of slices hence the strong diameter 2 property
implies the diameter 2 property.
On a particularly sunny day at a conference at the University
of Warwick in 2015, Olav Nygaard asked if the converse is ever true.
The aim of this short note is to answer this question
affirmatively by showing that both $c_0$, and in fact $C(K)$
for any scattered compact Hausdorff space $K$, have the
much stronger property that finite convex combinations of slices
of the unit ball are relatively weakly open.
See Theorems~\ref{thm:coco-slices-for-CK} and
\ref{thm:coco-slices-for-c0} below.

Let us note that in general it is not true that
finite convex combinations of slices of the unit ball
are relatively weakly open.
The Banach space $\ell_2$ is one example \cite[Remark~IV.5]{MR912637}.
In their proof that the strong diameter 2 property is stronger
than the diameter 2 property Haller, Langemets and P{\~o}ldvere
\cite{HL1} show that if $Z$ is an $\ell_p$-sum of two Banach spaces,
$Z = X \oplus_p Y$ with $1 < p < \infty$, then
for every $\lambda \in (0,1)$ there exists two slices $S_1$ and $S_2$,
and a $\beta > 0$ such that
$\lambda S_1 + (1-\lambda) S_2 \subset (1-\beta)B_Z$.

We should also remark that the positive part of the unit
sphere of $L_1[0,1]$,
$F = \{f \in L_1[0,1] : f \ge 0, \|f\|=1\}$
is another example of a closed convex bounded subset
of a Banach space that satisfies a converse
to Bourgain's lemma  in that finite convex combinations
of slices of $F$ are relatively weakly open
\cite[Remark~IV.5]{MR912637}.

The notation and conventions we use are standard and follow
e.g. \cite{MR2766381}.

\section{Main result}
\label{sec:main-result}

We start by recalling the following
definition (see e.g. \cite[Definition~14.19]{MR2766381}).
\begin{definition}
  A compact space $K$ is said to be \emph{scattered compact}
  if every closed subset $L \subset K$ has an isolated point in $L$.
\end{definition}

Let $K$ be scattered compact and
consider the Banach space $C(K)$
of all continuous functions on $K$ with $\sup$-norm.
Rudin \cite{MR0085475} showed that $C(K)^{*} = \ell_1(K)$
in this case. Pe{\l}czy{\'n}ski and Semadeni \cite{MR0107806} showed
that for a compact Hausdorff space $K$ we have
$C(K)^* = \ell_1(K)$ if and only if $K$ is scattered (= dispersed).

To prove the main result, we will need the following geometric lemma
for the unit circle in the complex plane.

\begin{lemma}\label{lem:pointsoncircle}
  Let $e^{i \alpha}$ and $e^{i \beta}$ be distinct
  points on the unit circle with distance
  $d = |e^{i \alpha} - e^{i\beta}|$.
  If $0 < \mu < \frac{1}{2}$,
  then the point
  $c = \mu e^{i \alpha} + (1 - \mu)e^{i \beta}$
  on the line segment between $e^{i\alpha}$ and $e^{i \beta}$
  satisfies
  \begin{equation*}
    |c| \le 1 - \frac{d^2 \mu }{4}.
  \end{equation*}
\end{lemma}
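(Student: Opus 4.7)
The plan is to compute $|c|^2$ directly and then extract the square-root bound. Expanding
\[
|c|^{2} = |\mu e^{i\alpha} + (1-\mu)e^{i\beta}|^{2}
= \mu^{2} + (1-\mu)^{2} + 2\mu(1-\mu)\Re\!\bigl(e^{i(\alpha-\beta)}\bigr),
\]
and using $d^{2} = |e^{i\alpha} - e^{i\beta}|^{2} = 2 - 2\Re(e^{i(\alpha-\beta)})$, one can replace the real part by $1 - d^{2}/2$. After collecting terms, the identity $\mu^{2} + (1-\mu)^{2} + 2\mu(1-\mu) = 1$ eliminates everything except the term involving $d^{2}$, yielding the clean formula
\[
|c|^{2} \;=\; 1 - \mu(1-\mu)\,d^{2}.
\]
Geometrically this is just the statement that $c$ lies on the chord joining $e^{i\alpha}$ and $e^{i\beta}$, and the pythagorean relation between the perpendicular from the origin to this chord and the two segments into which $c$ divides it.

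Next I would apply the elementary inequality $\sqrt{1-x} \le 1 - x/2$, valid for $x \in [0,1]$, with $x = \mu(1-\mu)d^{2} \le d^{2}/4 \le 1$, to deduce
\[
|c| \;\le\; 1 - \tfrac{1}{2}\mu(1-\mu)\,d^{2}.
\]
Finally, I would use the hypothesis $\mu < \tfrac{1}{2}$, which gives $1-\mu > \tfrac{1}{2}$ and hence $\tfrac{1}{2}\mu(1-\mu) \ge \tfrac{1}{4}\mu$, to conclude $|c| \le 1 - \mu d^{2}/4$.

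There is no real obstacle here: the whole argument is a short computation followed by two elementary estimates. The only small point of care is to ensure that $\mu(1-\mu)d^{2} \in [0,1]$ so that $\sqrt{1-x} \le 1 - x/2$ applies, and to use $\mu < 1/2$ precisely where it is needed, namely in the final step that trades the factor $1-\mu$ for the constant $1/2$.
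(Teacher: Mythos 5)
Your proof is correct and follows essentially the same route as the paper: compute the exact identity $|c|^2 = 1 - \mu(1-\mu)d^2$, apply $\sqrt{1-x} \le 1 - x/2$, and then use $\mu < \tfrac12$ to bound $\mu(1-\mu) \ge \mu/2$. The only cosmetic difference is that you verify $x = \mu(1-\mu)d^2 \le 1$ explicitly (via $\mu(1-\mu)\le\tfrac14$ and $d\le 2$), whereas the paper invokes the inequality in the equivalent form $\sqrt{1+x} \le 1 + x/2$ for $x \ge -1$.
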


\begin{proof}
  A straightforward calculation shows that
  $d^2 = 2 - 2\cos(\alpha - \beta)$
  and that $|c|^2 = \mu ^2 + (1-\mu )^2 + \mu (1-\mu )2\cos(\alpha-\beta)$.
  Hence $|c|^2 = 1 - d^2 \mu(1 - \mu)$.
  Since $\sqrt{1+x} \le 1 + \frac{x}{2}$ for $x \ge -1$
  and $\mu (1-\mu ) \ge \frac{\mu }{2}$ for $\mu  \in [0,\frac{1}{2}]$ we get
  \begin{equation*}
    |c| = \sqrt{1 - d^2\mu (1-\mu )}
    \le 1 - \frac{1}{2}d^2 \mu (1-\mu )
    \le 1 - \frac{d^2\mu }{4}
  \end{equation*}
  as desired.
\end{proof}

\begin{theorem}\label{thm:coco-slices-for-CK}
  Let $K$ be scattered compact. Then every finite convex combination of slices of
  the unit ball of $C(K)$ is relatively weakly open.
\end{theorem}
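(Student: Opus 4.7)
The plan is to fix $f_0 = \sum_{j=1}^n \lambda_j f_j$ with $f_j \in S_j = S(x_j^*,\varepsilon_j)$ and to exhibit a basic weak neighborhood $U$ of $f_0$ so that every $g \in U \cap B_{C(K)}$ can be written as $\sum_{j} \lambda_j g_j$ with $g_j \in S_j$. First I would pick $\eta > 0$ with $\Re x_j^*(f_j) > 1 - \varepsilon_j + \eta$ for every $j$. Since $C(K)^* = \ell_1(K)$, each $x_j^*$ is countably supported, so I can take a finite $F \subset K$ carrying all but $\eta/10$ of the $\ell_1$-mass of every $x_j^*$; write $x_j^* = y_j^* + z_j^*$ with $y_j^*$ supported on $F$ and $\|z_j^*\| < \eta/10$. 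The neighborhood is then $U = \{g : |g(t) - f_0(t)| < \delta,\ t \in F\}$ with $\delta > 0$ to be pinned down.

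Given $g \in U \cap B_{C(K)}$, the construction of the $g_j$ has two ingredients. The first is a pointwise lifting at each $t \in F$: I want $(h_j(t))_{j=1}^n \in \overline{\mathbb{D}}^n$ with $\sum_j \lambda_j h_j(t) = g(t)$ and $|h_j(t) - f_j(t)|$ small. Fixing a small $\rho > 0$, I split into cases. If $|g(t)| \le 1 - \rho$, a short finite-dimensional argument (shrinking any $f_j(t)$ that lies too close to the unit circle and absorbing the correction $g(t) - \tilde{f}_0(t)$) produces $h_j(t)$ with $|h_j(t)| \le 1 - \rho$ and $|h_j(t) - f_j(t)| \le \varepsilon_0$, where $\varepsilon_0 \to 0$ as $\rho,\delta \to 0$. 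If $|g(t)| > 1 - \rho$, I simply take $h_j(t) = g(t)$ for every $j$; here is where Lemma~\ref{lem:pointsoncircle} (applied pairwise to $(f_j(t), f_k(t))$ on the unit circle) intervenes: because $|f_0(t)| > 1 - \rho - \delta$, all the $f_j(t)$ must be clustered within $O(\sqrt{\rho+\delta})$ of $f_0(t)$, so the trivial choice $h_j(t) = g(t)$ is automatically close to $f_j(t)$.

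The second ingredient is to extend these pointwise data continuously. Since a scattered compact Hausdorff space is totally disconnected, $K$ has a basis of clopen sets; using the continuity of $g$ (after $g$ has been fixed), I can pick pairwise disjoint clopen sets $V_i \ni t_i$ with $V_i \cap F = \{t_i\}$ and $\sup_{s \in V_i} |g(s) - g(t_i)| < \rho$. I then define
\begin{equation*}
  g_j(s) =
  \begin{cases}
    g(s) & \text{if } s \in K \setminus \bigcup_i V_i, \text{ or } s \in V_i \text{ with } |g(t_i)| > 1 - \rho,\\
    h_j(t_i) + \bigl(g(s) - g(t_i)\bigr) & \text{if } s \in V_i \text{ with } |g(t_i)| \le 1 - \rho.
  \end{cases}
\end{equation*}
This is continuous because each clopen piece carries a continuous formula; $\sum_j \lambda_j g_j = g$ pointwise by construction; and $|g_j(s)| \le 1$, since either $g_j = g$ or $|g_j(s)| \le |h_j(t_i)| + |g(s) - g(t_i)| \le (1-\rho) + \rho = 1$.

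It then remains to verify the slice condition $\Re x_j^*(g_j) > 1 - \varepsilon_j$. The difference $\Re x_j^*(g_j) - \Re x_j^*(f_j)$ splits as $\Re y_j^*(h_j - f_j) + \Re z_j^*(g_j - f_j)$; the first summand is bounded by $\max_{t \in F} |h_j(t) - f_j(t)| \le \varepsilon_0 + O(\sqrt{\rho+\delta})$, and the second by $2\|z_j^*\| < \eta/5$. Choosing $\rho, \delta$ small enough that this total drops below $\eta$ finishes the argument, since $\Re x_j^*(f_j) > 1 - \varepsilon_j + \eta$. The main obstacle is the pointwise lifting, specifically in dealing with $f_j(t)$'s lying on the unit circle: a uniform shift $g_j = f_j + (g - f_0)$ immediately leaves the unit ball there, and it is precisely Lemma~\ref{lem:pointsoncircle} that rules out the obstructive situation by forcing those boundary $f_j(t)$'s to be nearly coincident whenever $f_0(t)$ itself is near the circle.
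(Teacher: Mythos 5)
Your proof follows the same overall architecture as the paper's (a finite set $F$ carrying all but $\eta$ of the $\ell_1$-mass of each functional, pointwise redistribution at the points of $F$, gluing, slice verification), but both key mechanisms are genuinely different, and both differences are defensible. For the gluing, the paper uses Urysohn functions $n_t$ supported in small neighborhoods $\mathcal{V}_t$ and blends the corrected values back into $y$; you instead invoke that a scattered compact Hausdorff space is totally disconnected, hence (being compact Hausdorff) zero-dimensional, and glue along a clopen partition. That is correct and avoids the blending bookkeeping entirely. For the boundary difficulty, the paper splits by the configuration of the $z_i(t)$ and, in its hardest case (all $|z_i(t)|=1$ but not all equal), orders the arguments and adds telescoping perturbations $c_p=\rho(e^{i\theta_{p-1}}-e^{i\theta_p})$, using Lemma~\ref{lem:pointsoncircle} \emph{directly} to push each perturbed value a definite distance inside the disc. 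You use the lemma \emph{contrapositively}: if $|g(t)|>1-\rho$ then $|f_0(t)|>1-\rho-\delta$ forces all $f_j(t)$ to cluster, so $h_j(t)=g(t)$ works. This is slicker, but note one inaccuracy: Lemma~\ref{lem:pointsoncircle} concerns two points \emph{on} the circle, while your $f_j(t)$ only lie in the disc, so ``applied pairwise'' is not literally available. The correct tool is the disc version of the same computation: with $u=\overline{f_0(t)}/|f_0(t)|$, from $\sum_j\lambda_j\Re\bigl(u f_j(t)\bigr)=|f_0(t)|$ and $\Re\bigl(u f_j(t)\bigr)\le 1$ one gets $\Re\bigl(u f_j(t)\bigr)\ge 1-(\rho+\delta)/\lambda_j$, hence $\bigl|f_j(t)-f_0(t)/|f_0(t)|\bigr|^2\le 2(\rho+\delta)/\lambda_j$; this also makes explicit that your $O(\sqrt{\rho+\delta})$ constant depends on $\max_j\lambda_j^{-1}$, as in the paper.

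There is one quantitative slip in your Case 1, and it is exactly at the interface with the gluing. Your claimed lifting ($|h_j(t)|\le 1-\rho$ with $\varepsilon_0\to 0$) is not delivered by ``shrinking and absorbing'': setting $h_j=(1-\theta)f_j(t)+\bigl(g(t)-(1-\theta)f_0(t)\bigr)$ gives $\sum_j\lambda_j h_j=g(t)$ and $|h_j|\le(1-\theta)+\theta|g(t)|+\delta\le 1-\theta\rho+\delta$, i.e.\ a margin of order $\theta\rho$ with movement $\le 2\theta+\delta$; forcing the margin up to $\rho$ requires $\theta\ge 1+\delta/\rho$, which destroys closeness to $f_j(t)$. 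Indeed, in the borderline situation $\lambda_1=\lambda_2=\tfrac12$, $f_1(t)=e^{i\gamma}$, $f_2(t)=e^{-i\gamma}$ with $\cos\gamma=1-\rho$ and $g(t)=1-\rho$, the extreme-point structure of the disc of radius $1-\rho$ forces $h_1=h_2=1-\rho$, at distance $\sqrt{2\rho-\rho^2}$ from the $f_j(t)$ --- so margin $\rho$ costs movement of order $\sqrt{\rho}$ and cannot come from a small shrinkage. Fortunately your own setup contains the repair: the clopen sets $V_i$ are chosen \emph{after} $g$ is fixed, so the oscillation bound on $V_i$ is a free parameter, not tied to the case-split threshold $\rho$. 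Choose $\theta$ small first, then $\rho$, then $\delta<\theta\rho/2$, and finally require $\sup_{s\in V_i}|g(s)-g(t_i)|<\sigma$ with $\sigma:=\theta\rho/2$; the naive shrink then gives $|h_j(t_i)|\le 1-\theta\rho+\delta\le 1-\sigma$, hence $|g_j(s)|\le|h_j(t_i)|+\sigma\le 1$ on $V_i$, and the slice estimate closes as you wrote it. With that decoupling of $\sigma$ from $\rho$, your argument is a complete and genuinely different proof of the theorem.
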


\begin{proof}
  Let $\{S(f_i,\varepsilon_i)\}_{i=1}^k$
  be slices of $B_{C(K)}$ and let $\lambda_i > 0$
  with $\sum_{i=1}^k \lambda_i = 1$, and consider the
  convex combination of these slices
  \begin{equation*}
    C = \sum_{i=1}^k \lambda_i S(f_i,\varepsilon_i).
  \end{equation*}
  Let $x = \sum_{i=1}^k \lambda_i z_i \in C$
  with $z_i \in S(f_i,\varepsilon_i)$.
  Our goal is to find a non-empty relatively weakly open
  neighborhood of $x$ that is contained in $C$.

  Let $d = \min\{\Re f_i(z_i) - (1-\varepsilon_i) : 1 \le i \le k\}$
  and let $\eta > 0$ be such that $\eta < d/3$.
  Let $E \subset K$ be a finite set such that
  $\sum_{t \notin E} |f_i(t)| < \eta$
  for $1 \le i \le k$.

  Define
  \begin{equation*}
    \mathcal{U} =
    \left\{
      y \in B_{C(K)} : |y(t) - x(t)| < \delta,
      t \in E
    \right\}
  \end{equation*}
  where $\delta > 0$. Next we specify how $\delta$ is chosen.

  Let $L = \max\{\frac{1}{\lambda_i} : i=1,2,\ldots,k \}$.
  Let
  \begin{equation*}
    E_I =
    \left\{
      t \in E : \text{there exists} \;
      1 \le i_0 \le k \; \text{such that}\; |z_{i_0}(t)| < 1
    \right\}.
  \end{equation*}
  Define $\delta_I = (1+3L)^{-1} \min_{t \in E_I} (1-|z_{i_0}(t)|)$.
  Let
  \begin{equation*}
    E_{III} =
    \left\{
      t \in E \setminus E_I :
      \text{there exists} \; i \neq j
      \; \text{such that}\;
      z_i(t) \neq z_j(t)
    \right\}
  \end{equation*}
  and define
  \begin{equation*}
    D =
    \min_{t \in E_I} \min_{z_i(t) \neq z_j(t)} \{|z_i(t)-z_j(t)|^2\}.
  \end{equation*}
  Choose $0 < \rho < \min \{ D/8, \eta/4L \}$.
  Define $\delta_{III} = D\rho (4(1+3L))^{-1}$.
  Finally we choose $\delta < \min\{\eta/6L, \delta_I, \delta_{III}\}$.

  Let $y \in \mathcal{U}$.
  Define $w(t) = y(t) - x(t)$ for all $t \in K$.
  We will define $\bar{z}_i \in S(f_i,\varepsilon_i)$,
  $i=1,2,\ldots,k$, and show that $y$ can be written
  $y = \sum_{i=1}^k \lambda_i \bar{z}_i \in C$.

  If $t \in E$ is an isolated point let $\mathcal{V}_t = \{t\}$.
  While for a non-isolated point $t \in E$ we let $\mathcal{V}_t$
  be a neighborhood of $t$ chosen so that $\mathcal{V}_t
  \cap \mathcal{V}_{t'} = \emptyset$ for all $t' \neq t$, $t' \in E$,
  and $|z_i(t) - z_i(s)| < \delta$, $1 \le i \le k$,
  $|x(t)-x(s)| < \delta$
  and $|y(t)-y(s)| < \delta$
  for all $s \in \mathcal{V}_t$.
  In particular, we get $|x(s)-y(s)| < 3\delta$
  for all $s \in V_t$.

  \noindent
  \textbf{Definition of $\bar{z}_i$ outside
    $\bigcup_{t \in E} \mathcal{V}_t$.}

  For $s \in K \setminus \cup_{t \in E} \mathcal{V}_t$
  we define $\bar{z}_i(s) = y(s)$
  for all $1 \le i \le k$.

  \noindent
  \textbf{Definition of $\bar{z}_i$ on
    $\bigcup_{t \in E} \mathcal{V}_t$.}

  Let $t \in E$.

  \noindent
  \emph{Case I:}
  If there exists $i_0$ with $|z_{i_0}(t)| < 1$,
  then we choose by Urysohn's lemma
  a real-valued non-negative continuous function  $n_t \in S_{C(K)}$ with
  $n_t(t) = 1$ such that $n_t(s) = 0$ off
  $\mathcal{V}_t$. Now, for $s \in \mathcal{V}_t$
  let
  \begin{equation*}
    \bar{z}_{i_0}(s) = n_t(s)[z_{i_0}(s) +
    \lambda_{i_0}^{-1} w(s)] + [1-n_t(s)]y(s)
  \end{equation*}
  and for $i \neq i_0$ we let
  \begin{equation*}
    \bar{z}_i(s) = n_t(s)z_i(s) +
    [1-n_t(s)]y(s).
  \end{equation*}
  Then
  \begin{align*}
    \sum_{i=1}^k \lambda_i \bar{z_i}(s)
    &= n_t(s)w(s) + \sum_{i=1}^k \lambda_i
      \bigg[n_t(s)z_i(s) + (1- n_t(s))y(s)\bigg]\\
    &= n_t(s)(y(s) - x(s)) + n_t(s)
      x(s) + (1-n_t(s))y(s)= y(s).
  \end{align*}
  By the choice of $\delta$
  \begin{align*}
    |z_{i_0}(s) + \lambda_{i_0}^{-1} w(s)|
    &\le
      |z_{i_0}(t)| + |z_{i_0}(s) - z_{i_0}(t)|
      + L |y(s) - x(s)| \\
    &\le
      |z_{i_0}(t)| + \delta + 3L\delta
      < 1,
  \end{align*}
  thus we have $|\bar{z}_{i}(s)| \le 1$ for every $1 \le i \le k$.
  Moreover, for each $1 \le i \le k$ the function $\bar{z}_{i}$
  is continuous on $K$ since $z_{i}$, $y$, $x$ and $n_t$ all are and since
  $n_t$ is zero off $\mathcal{V}_t$ so $\bar{z}_{i} = y$ there.

  We will need that
  $|\bar{z}_{i_0}(t) - z_{i_0}(t)|
  \le \lambda_{i_0}^{-1}|y(t)-x(t)| < L\delta < \eta$
  and $|\bar{z}_{i}(t) - z_{i}(t)| = 0$ for $i \neq i_0$.

  \noindent
  \emph{Case II:}
  If for all $1 \le i,l \le k$ we have $z_i(t) = z_l(t)$ with $|z_i(t)|= 1$,
  then $x(t) = z_i(t)$ and we can just let
  $\bar{z}_i(s) = y(s)$ for all $1 \le i \le k$
  and $s \in \mathcal{V}_t$. Continuity of $\bar{z}_i$ on $K$
  in this case is trivial.

  We will need that $|\bar{z}_i(t)-z_i(t)|
  = |y(t)-x(t)| < \delta < \eta$.

  \noindent
  \emph{Case III:}
  If $|z_i(t)| = 1$ for all $1 \le i \le k$,
  but not all $z_i(t)$ are equal.
  Order the set $\{ \arg z_i(t) :  1 \le i \le k\}$
  as an increasing sequence
  $\{\theta_1 < \theta_2 < \cdots < \theta_q\}$
  and define $\theta_0 = \theta_q$.
  We put $A_p = \{i: \arg z_i(t) =
  \theta_p\}$ and $\Lambda_p = \sum_{i \in A_p} \lambda_i$.

  With $\rho$ as above we define for $1 \le p \le q$
  \begin{equation*}
    c_p = \rho(e^{i\theta_{p-1}} - e^{i\theta_p}).
  \end{equation*}
  Let $s \in \mathcal{V}_t$ and define (for $i \in A_p$)
  \begin{equation*}
    \bar{z}_i(s) = n_t(s)\bigg[z_i(s) +
    \frac{c_p}{\Lambda_p} + \frac{w(s)}{q\Lambda_p}\bigg] + (1-n_t(s))y(s).
  \end{equation*}

  We have
  \begin{multline*}
    \sum_{i=1}^k \lambda_i \bar{z}_i(s)
    = \sum^q_{p =1}\sum_{i \in A_p}\lambda_i \bar{z}_i(s)\\
    =
    \sum^q_{p=1}
    n_t(s)
    \sum_{i \in A_p} \lambda_i z_i(s) + \sum^q_{p=1} n_t(s)c_p +
    \sum^q_{p=1}n_t(s)\frac{w(s)}{q}
    +  (1-n_t(s))y(s)
    \\
    =n_t(s)\sum_{i=1}^k \lambda_i z_i(s) +
    n_t(s)0 + n_t(s)w(s) + (1-n_t(s))y(s)\\
    = n_t(s) x(s) + n_t(s)(y(s) -
    x(s)) + y(s) - n_t(s) y(s)
    = y(s).
  \end{multline*}

  With $\mu = \rho/\Lambda_p$
  \begin{equation*}
    z_i(t) + \frac{c_p}{\Lambda_p}
    =
    e^{i \theta_p} + \mu(e^{i\theta_{p-1}} - e^{i\theta_p})
    =
    \mu e^{i\theta_{p-1}} + (1-\mu)e^{i\theta_p}.
  \end{equation*}
  so by Lemma~\ref{lem:pointsoncircle}
  \begin{equation*}
    |z_i(t) + \frac{c_p}{\Lambda_p}|
    \le 1 - \frac{|e^{i\theta_{p-1}}-e^{i \theta_p}|^2
      \rho}{4\Lambda_p}
    \le 1 - \frac{D\rho}{4\Lambda_p}
    < 1 - \frac{D\rho}{4}
    < 1 - (1+3L)\delta
  \end{equation*}
  hence
  \begin{align*}
    \left|
    z_i(s) + \frac{c_p}{\Lambda_p} + \frac{w(s)}{q\Lambda_p}
    \right|
    &\le
    |z_i(t) + \frac{c_p}{\Lambda_p}|
    +
    |z_i(s) - z_i(t)|
    +
    \left|\frac{w(s)}{q\Lambda_p}\right| \\
    &< 1 - (1+3L)\delta
    + \delta + 3L\delta = 1.
  \end{align*}
  Thus we have $|\bar{z}_i(s)| \le 1$.
  We will also need that
  \begin{equation*}
    |\bar{z}_i(t)-z_i(t)|
    = \left|
      \frac{c_p}{\Lambda_p}
      + \frac{w(t)}{q\Lambda_p}
    \right|
    \le \rho|e^{i\theta_{p-1}} - e^{i\theta_p}|L
    + 3\delta L
    \le 2L \rho + 3L \delta
    \le \eta.
  \end{equation*}

  Also note that for each $1 \le i \le k$ the function $\bar{z}_{i}$
  is continuous on $K$ since $z_{i}$, $y$, $x$ and $n_t$ all are and since
  $n_t$ is zero off $\mathcal{V}_t$ so $\bar{z}_{i} = y$ there.

  \noindent
  \textbf{Conclusion.}

  So far we have defined $\bar{z}_i \in B_{C(K)}$ and shown
  that $y = \sum_{i=1}^k \lambda_i \bar{z}_i$.
  It only remains to show that $\bar{z}_i \in S(f_i,\varepsilon_i)$.
  We have
  \begin{equation*}
    \sum_{t \notin E} |f_i(t)
    ( \bar{z}_i(t) - z_i(t) )| < \eta\|\bar{z}_i - z_i\|
    \le 2 \eta,
  \end{equation*}
  and
  \begin{equation*}
    \sum_{t \in E} |f_i(t)
    ( \bar{z}_i(t) - z_i(t) )|
    < \|f_i\|\eta
    < \eta.
  \end{equation*}
  Hence $|f_i(\bar{z}_i - z_i)| < 3\eta$
  so that
  \begin{equation*}
   \Re f_i(\bar{z}_i) \ge \Re f_i(z_i) - 3\eta
    > \Re f_i(z_i) - d > 1 - \varepsilon_i,
  \end{equation*}
  and we are done.
\end{proof}

The above theorem applies to $C[0,\alpha]$
for any infinite ordinal $\alpha$, and
in particular to $c = C[0,\omega]$.
It should be clear that the proof also works
for real scalars and that
it proves the following result.

\begin{theorem}\label{thm:coco-slices-for-c0}
  Every finite convex combination of slices of
  the unit ball of $c_0$ is relatively weakly open.
\end{theorem}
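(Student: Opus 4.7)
The plan is to rerun the argument of Theorem~\ref{thm:coco-slices-for-CK} essentially verbatim in the real-scalar setting, with $c_0$ playing the role of $C(K)$ and $\mathbb{N}$ (with the discrete topology) playing the role of $K$. Since $c_0^{*} = \ell_1(\mathbb{N})$, each slice functional $f_i \in S_{\ell_1}$ has arbitrarily small tails, so given slices $S(f_i,\varepsilon_i)$, a point $x = \sum_i \lambda_i z_i$ in the convex combination, and $\eta>0$ small compared to the slice data, I can choose a finite $E \subset \mathbb{N}$ with $\sum_{n \notin E}|f_i(n)| < \eta$ for every $i$. The candidate neighbourhood
\begin{equation*}
 \mathcal{U} = \{\, y \in B_{c_0} : |y(n) - x(n)| < \delta,\ n \in E\,\}
\end{equation*}
is relatively weakly open, because each coordinate evaluation is an element of $c_0^{*}$. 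The choice of $\delta$ in terms of $\eta$, $L$, $D$, $\rho$ is unchanged.

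The construction of $\bar{z}_i$ then proceeds exactly as before, with the simplification that every $n \in \mathbb{N}$ is isolated. Thus I can take $\mathcal{V}_n = \{n\}$ for each $n \in E$; the Urysohn functions collapse to indicators of singletons, which are automatically continuous, and the continuity conditions on $z_i$, $x$, $y$ across the neighbourhoods become vacuous. Setting $\bar{z}_i(n) = y(n)$ on $\mathbb{N}\setminus E$ keeps $\bar{z}_i \in c_0$ since $\bar{z}_i$ and $y$ agree off the finite set $E$. Cases~I and II of the $C(K)$ proof carry over word for word at points $n \in E$.

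The only genuine check is Case~III, since the original argument invoked the complex unit circle. In the real setting, the hypothesis that $|z_i(n)| = 1$ for all $i$ with the $z_i(n)$ not all equal forces $z_i(n) \in \{-1,+1\}$ with both signs appearing, which is exactly the configuration $q=2$, $\theta_1 = 0$, $\theta_2 = \pi$ of Lemma~\ref{lem:pointsoncircle}. The perturbation vectors $c_p$ are then real, and the same formula for $\bar{z}_i(n)$ together with the same bound yields $|\bar{z}_i(n)| \le 1$. Verifying $y = \sum_i \lambda_i \bar{z}_i$ and the slice estimate $\Re f_i(\bar{z}_i) > 1 - \varepsilon_i$ by splitting the sum over $E$ and $\mathbb{N}\setminus E$ is unchanged.

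The subtle point I want to flag, and the reason this is not a literal corollary of Theorem~\ref{thm:coco-slices-for-CK}, is that $c_0$ is not itself of the form $C(K)$: it sits as the codimension-one subspace of $C(\alpha\mathbb{N})$ of functions vanishing at infinity, so one cannot simply restrict the previous result. What makes the proof go through anyway is that the $\bar{z}_i$ constructed above each agree with $y \in c_0$ on the cofinite set $\mathbb{N}\setminus E$, hence lie in $c_0$; this is the only additional bookkeeping item beyond replaying the earlier argument in the real-scalar, discrete-topology setting.
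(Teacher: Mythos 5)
Your proposal is correct and takes essentially the paper's own route: the paper proves Theorem~\ref{thm:coco-slices-for-c0} simply by remarking that the proof of Theorem~\ref{thm:coco-slices-for-CK} works for real scalars, and you replay exactly that construction on $\mathbb{N}$ with every point isolated, correctly reducing Case~III to the antipodal configuration $z_i(n) \in \{-1,+1\}$ of Lemma~\ref{lem:pointsoncircle} and verifying the one genuinely new point, namely that $\bar{z}_i \in c_0$ because it agrees with $y$ off the finite set $E$. The details you fill in (indicator functions replacing Urysohn functions, coordinate functionals in $\ell_1 = c_0^*$ showing $\mathcal{U}$ is relatively weakly open) are precisely the simplifications the paper leaves implicit.
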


\section{Questions and remarks}
\label{sec:questions-remarks}

We will end with some questions and remarks.

\begin{enumerate}
\item\label{item:q1}
  Which Banach spaces satisfy that
  finite convex combinations of slices of the unit ball
  are relatively weakly open?
  Does this hold for spaces with the
  strong diameter 2 property?
\item\label{item:q2}
  Which Banach spaces satisfy that
  finite convex combinations of slices of the unit ball
  contain a non-empty relatively weakly open neighborhood
  of some point in the combination?
\item\label{item:q3}
  Which Banach spaces satisfy that
  finite convex combinations of slices of the unit ball
  always have non-empty intersection with the sphere?
\item\label{item:q4}
  If finite convex combinations of slices
  of both $B_X$ and $B_Y$ are relatively weakly
  open, is the same true for the unit ball
  of $X \oplus_\infty Y$ and/or $X \oplus_1 Y$?
\end{enumerate}

It is not clear that there is a connection
between having relatively weakly open
convex combinations of slices
and the diameter two properties.
But we have the following observation.

\begin{remark}
  Let $X$ be a Banach space such that
  there exists a slice $S_1 = S(x^*,\varepsilon)$
  of $B_X$ with $\diam S_1 < 1$, then there is
  a convex combination of slices of $B_X$ that
  is not relatively weakly open.

  Define $S_2 = S(-x^*,\varepsilon)$. Since $x \in S_1$ if and only
  if $-x \in S_2$ we have that $S_2$ satisfies
  $\diam S_2 = \diam S_1 < 1$.

  Let $C = \frac{1}{2}S_1 + \frac{1}{2}S_2$.  Fix $x \in S_1$. We
  have $\frac{1}{2}x + \frac{1}{2}(-x) = 0 \in C$.  If
  $\frac{1}{2}x_1 + \frac{1}{2}x_2 \in C$, then
  \begin{equation*}
    \|\frac{1}{2}x_1 + \frac{1}{2}x_2\|
    \le
    \frac{1}{2}\|x_1 - x\|
    +
    \frac{1}{2}\|x_2 - (-x)\|
    < 1
  \end{equation*}
  hence $C \cap S_X = \emptyset$ and $C$ is not relatively weakly
  open.
\end{remark}

Regarding Question~\ref{item:q3} we have
the following examples of spaces where
finite convex combinations of slices
intersect the sphere.

\begin{example}
  Finite convex combinations of slices of the unit ball
  of $L_1[0,1]$ always intersect the sphere.
  Here slices are given by functions $g_i \in B_{L_\infty[0,1]}$.
  We may assume that the $g_i$'s are simple functions
  and find sets $B_i \subset [0,1]$ with $B_i \cap B_j = \emptyset$
  and $\|\chi_{B_i} g_i\|_\infty$ almost one.
  The functions $f_i = m(B_i)^{-1} \chi_{B_i}$ does the job
  ($m$ is Lebesgue measure).
\end{example}

\begin{example}
  Let $X$  be a Banach space such that whenever
  $S_i = S(x_i^*,\varepsilon_i)$ with $x_i^* \in S_X$ and
  $\varepsilon_i > 0$ for $1 \le i \le n$,
  are $n$ slices of $B_X$, then there exists
  $x_i \in S_i \cap S_X$ and $y \in S_X$ such that
  $\|x_i \pm y\| = 1$ and $x_i + y \in S_i$.

  Spaces that satisfy this condition include
  $\ell_\infty^c(\Gamma)$ for $\Gamma$ uncountable
  since this space is almost square with $\varepsilon = 0$
  \cite[Remark~2.11]{ALL}.
  It also includes $\ell_\infty$ and $C[0,1]$
  since the slices there are defined by measures
  of bounded variation.

  If $X$ is a space with this property, then
  finite convex combinations of slices of $B_X$
  always intersect the sphere.

  Indeed, let $\lambda_i > 0$ with $\sum_{i=1}^n \lambda_i = 1$
  and let $S_i = S(x_i^*,\varepsilon_i)$
  be slices of $B_X$ with $x_i^* \in S_{X^*}$
  and $\varepsilon_i > 0$ for $1 \le i \le n$.

  By assumption, there exists
  $x_i \in S_i \cap S_X$ and $y \in S_X$ such that
  $\|x_i \pm y\| = 1$ and $x_i + y \in S_i$.

  Choose $y^* \in S_{X^*}$ such that $y^*(y) = 1$.
  Then
  \begin{equation*}
    1 = \|x_i \pm y\| \ge y^*(y) \pm y^*(x_i)
    = 1 \pm y^*(x_i)
  \end{equation*}
  hence $y^*(x_i) = 0$. Now $\sum_{i=1}^n \lambda_i (x_i + y) \in
  \sum_{i=1}^n \lambda_i S_i$ and
  \begin{equation*}
    \|\sum_{i=1}^n \lambda_i (x_i + y)\|
    \ge \sum_{i=1}^n \lambda_i y^*(y) = 1.
  \end{equation*}
\end{example}

\begin{example}
  If $X$ has the Daugavet property, then
  finite convex combinations of weak$^*$-slices
  of $B_{X^*}$ intersect the sphere $S_{X^*}$.
  To see this let $1 \le i \le n$,
  $x_i \in S_X$, $\varepsilon_i > 0$, and
  let $S(x_i,\varepsilon_i)$ be slices of $B_{X^*}$.

  Let $x_1^* \in S(x_1,\varepsilon_1) \cap S_{x^*}$
  and $V_1 = \linspan(x_1^*)$.
  By \cite[Lemma~2.12]{KadSSW}
  there exists $x_2^* \in S(x_2,\varepsilon_2) \cap S_{X^*}$
  such that $\|x_2^* + v^*\| = 1 + \|v^*\|$ for all $v^* \in V_1$.

  Let $V_k = \linspan(x_1^*,x_2^*,\ldots,x_k^*)$.
  Again by \cite[Lemma~2.12]{KadSSW}
  there exists
  $x_{k+1}^* \in S(x_{k+1},\varepsilon_{k+1}) \cap S_{X^*}$
  such that $\|x_{k+1}^* + v^*\| = 1 + \|v^*\|$ for all $v^* \in V_{k}$.

  If $\lambda_i > 0$ with $\sum_{i=1}^n \lambda_i = 1$,
  then it follows that
  $\sum_{i=1}^n \lambda_i x_i^*
  \in \sum_{i=1}^n \lambda_i S(x_i,\varepsilon_i)$ and
  \begin{equation*}
    \|\sum_{i=1}^n \lambda_i x_i^*\|
    = \lambda_n + \|\sum_{i=1}^{n-1} \lambda_i x_i^*\|
    = \cdots = \sum_{i=1}^n \lambda_i = 1.
  \end{equation*}
\end{example}

\begin{ack*}
  We thank Stanimir Troyanski and Olav Nygaard
  for fruitful conversations.
\end{ack*}

\providecommand{\MR}{\relax\ifhmode\unskip\space\fi MR }
% \MRhref is called by the amsart/book/proc definition of \MR.
\providecommand{\MRhref}[2]{%
  \href{http://www.ams.org/mathscinet-getitem?mr=#1}{#2}
}
\providecommand{\href}[2]{#2}

\end{document}